%
%
%
%
\documentclass[12pt]{amsart}
\usepackage{amsmath,amssymb,amsthm,mathtools}
\usepackage{mathrsfs,bm}
\usepackage{thmtools,thm-restate}
\usepackage{tikz-cd}
\usepackage{tikz}
\usepackage{enumitem}
\usepackage{graphicx}
\usetikzlibrary{matrix,arrows.meta,bending}
\usepackage{color}
\usepackage[numbers]{natbib}
\usepackage{booktabs}
\usepackage{longtable}
\usepackage{pgfplots}
\pgfplotsset{compat=1.14} 
\usetikzlibrary{arrows.meta,decorations.pathreplacing,positioning,calc} 

\usepackage[a4paper, left=3cm, right=3cm, top=3cm, bottom=3cm, footskip=15mm]{geometry}

\newtheorem{theorem}{Theorem}[section]

\newtheorem{corollary}[theorem]{Corollary}

\theoremstyle{definition}

\theoremstyle{remark}
\newtheorem{remark}[theorem]{Remark}

\numberwithin{equation}{section}



\makeindex

\usepackage{fancyhdr}
\pagestyle{fancy}
\fancyhf{}                      
\cfoot{\thepage}                

\usepackage{calc} 

\AtBeginDocument{%
  \setlength{\textwidth}{\dimexpr\paperwidth - 6cm\relax}%
  \setlength{\oddsidemargin}{\dimexpr 3cm - 1in\relax}%
  \setlength{\evensidemargin}{\dimexpr 3cm - 1in\relax}%
  \setlength{\marginparwidth}{2.5cm}%
}

\setlength{\textwidth}{15.00cm}            
\setlength{\oddsidemargin}{0.46cm}         
\setlength{\evensidemargin}{0.46cm}

\begin{document}

\title{On the pointwise periodicity of multiplicative and additive functions}

\author{T. Agama}
\address{Department of Mathematics, African Institute for Mathematical science, Ghana
}
\email{theophilus@aims.edu.gh/emperordagama@yahoo.com}

\subjclass[2010]{Primary 11N64}

\date{\today}


\keywords{pointwise; period; extremal; additive; multiplicative}

\begin{abstract}
We study the problem of estimating the number of points of coincidences of an idealized gap on the set of integers under a given multiplicative function $g:\mathbb{N}\longrightarrow \mathbb{C}$, respectively, an additive function $f:\mathbb{N}\longrightarrow \mathbb{C}$. We obtain various lower bounds depending on the length of the period, by varying the worst growth rates of the ratios of their consecutive values.
\end{abstract}

\maketitle

\begingroup
  \setlength{\parskip}{6pt} 
  \tableofcontents
\endgroup

\section{Introduction}

Additive and multiplicative arithmetic functions occupy a central place in modern analytic and probabilistic number theory. They arise naturally from the prime factorization of integers and encode subtle information about the arithmetic structure of positive integers through their values on prime powers and coprime products. Classical references such as \cite{apostol1976,elliott1985,galambos1970,tenenbaum2015introduction,May} show that these classes of functions are not only fundamental in their own right, but also serve as a bridge between average-order phenomena, extremal-order estimates, and probabilistic models for arithmetic data (positive integers).\\

The present paper deals with a shifted coincidence problem for such functions. Given a fixed shift $l>0$, we study the size of the set of integers $n\leq x$ for which a prescribed arithmetic function repeats its value at the translated points $n-l$ and $n+l$. In the notation of the paper, these sets are measured by the quantities
$$
\mathcal{G}(x,l)_f,\quad \mathcal{G}(x,l)^{+}_f,\quad \mathcal{G}(x,l)^{-}_f,
$$
and their multiplicative analogs. Conceptually, these counting functions measure a weak form of periodic behavior: instead of asking whether the function is globally periodic, we ask how often a fixed translation creates a coincidence of values. This is a local and pointwise notion of recurrence, and it is particularly natural for arithmetic functions whose values are highly structured but not periodic in any classical sense.\\

A key theme of the paper is that the size of these coincidence sets is governed by the worst-case growth of consecutive values. Roughly speaking, if the ratio between neighboring values is small, then coincidences become more frequent; if that ratio grows faster, the lower bounds weaken accordingly. The paper therefore organizes its main results according to different growth regimes for
$$
\frac{g(n+1)}{g(n)} \quad\text{and}\quad \frac{f(n+1)}{f(n)},
$$
where $g$ is multiplicative and $f$ is additive by convention. This perspective leads to a uniform strategy: a shifted coincidence $f(n)=f(n+l)$ or $g(n)=g(n+l)$ is first restricted to the case $l\mid n$, and then rewritten in terms of consecutive values after substitution $n=l m$. The arithmetic information is thus transferred from a fixed shift to a comparison of neighboring values, where the assumed growth control can be directly exploited. This reduction is the mechanism behind all the lower bounds, as proved later in the paper.\\

The first family of results treats multiplicative functions with ratios of consecutive values bounded above by a logarithmic factor. In that regime, the coincidence count is shown to be at least on the scale of
$$
\frac{x/l}{\log\log(x/l)}.
$$
The paper then specializes this estimate to classical functions such as the Euler totient function $\varphi$ and the sum-of-divisors function $\sigma$, using their standard extremal-order bounds. A second multiplicative result shows that when the consecutive ratio is bounded above and below by absolute constants, the set of coincidences is essentially of full order $x/l$. This is the strongest regime considered in the paper, and it reflects the fact that near-constant multiplicative growth leaves little room for separation between neighboring values.\\

The additive side of the argument is analogous, but the bounds change according to the larger typical size of additive arithmetic functions. When the ratio of consecutive values grows no faster than a logarithm divided by a power of $\log\log n$, the paper proves a lower bound of order
$$
\frac{x}{l}\frac{(\log\log(x/l))^{c}}{\log(x/l)}.
$$
In the completely additive setting, a pure logarithmic ratio bound yields the more familiar scale
$$
\frac{x/l}{\log(x/l)}.
$$
These estimates are then illustrated by applying the general theorem to $\omega(n)$ and $\Omega(n)$, whose standard extremal behavior is well documented in the literature. Finally, the paper notes that if one allows for a linear growth rate for the ratio of consecutive values, the count of coincidence still cannot be too small; in that case, a logarithmic lower bound survives.\\

The preliminary section collects important facts about the extremal order of the arithmetic functions needed later in the paper, including standard estimates for $\varphi$, $\tau$, $\Omega$, $\sigma$, and $\omega$. These are not merely auxiliary tools: they provide concrete examples showing how the abstract ratio conditions translate into familiar arithmetic functions. The main section (lower-bound section) then develops the general coincidence estimates and derives the corollaries for specific functions. In this way, the paper moves from a broad structural principle--growth control implies coincidence abundance--to concrete arithmetic consequences for some of the most classical functions in number theory.

\subsection{Organization of the paper} The organization of the paper is as follows. Section~2 states the coincidence problem in a precise way and fixes the notation used throughout. Section~3 reviews the standard results of the extremal-order for the arithmetic functions that will later serve as examples. Section~4 contains the main lower-bound theorems, together with their corollaries and remarks.

\section{The problem statement}

Let $f:\mathbb{N}\longrightarrow \mathbb{C}$. We say that $f$ is periodic on the set $\{1,\ldots,x\}\subset \mathbb{N}$ with the period $l$ if $f(n-l)=f(n)=f(n+l)$ for all $n\in \{1,\ldots,x\}\subset \mathbb{N}$. We say that $f$ is pointwise left-periodic if for any $l>0$ there exists some $n\in \{1,\ldots, x\}\subset \mathbb{N}$ such that $f(n-l)=f(n)$. Similarly, we say that it is pointwise right-periodic if for any $l>0$ there exists some $n\in \{1,\ldots,x\}\subset \mathbb{N}$ such that $f(n)=f(n+l)$. We call $l:=l(n)>0$ the pointwise period. We say that it is fully-pointwise periodic with period $l:=l(n)>0$ if $f(n-l)=f(n)=f(n+1)$. In this paper, we study the problem of estimating the size of the quantity 
\begin{align}
\mathcal{G}(x,l)_f:=\# \{n\leq x:f(n-l)=f(n)=f(n+l),~\mathrm{for~fixed}~l>0\}.\nonumber
\end{align}
First, we obtain a general theorem for problems of this flavour and narrow it to specific examples by varying the arithmetic functions. We study the problem in the context of pointwise left-periodicity and pointwise right-periodicity. In particular, under a given multiplicative function $g$ or an additive function $f$, we study the size of the following quantities
\begin{align}
\mathcal{G}(x,l)^{+}_f:=\# \{n\leq x:f(n)=f(n+l),~\mathrm{for~fixed}~l>0\}\nonumber
\end{align}
and 
\begin{align}
\mathcal{G}(x,l)^{-}_f:=\# \{n\leq x:f(n-l)=f(n),~\mathrm{for~fixed}~l>0\},\nonumber
\end{align}
respectively, 
\begin{align}
\mathcal{G}(x,l)^{+}_g:=\# \{n\leq x:g(n)=g(n+l),~\mathrm{for~fixed}~l>0\}\nonumber
\end{align}
and 
\begin{align}
\mathcal{G}(x,l)^{-}_g:=\# \{n\leq x:g(n-l)=g(n),~\mathrm{for~fixed}~l>0\}.\nonumber
\end{align}
In particular, we obtain the following results 

\begin{theorem}
Let $g:\mathbb{N}\longrightarrow \mathbb{C}$ be a multiplicative function with $g(t)\neq 0$ for all $t\in \mathbb{N}$ such that \begin{align}
1\ll_g\frac{g(n+1)}{g(n)}\ll_g C\log \log n\nonumber
\end{align}
for an absolute constant $C>0$. We have 
\begin{align}
\mathcal{G}(x,l)^{+}_{g}\gg_g \frac{1}{C}\frac{\frac{x}{l}}{\log \log (\frac{x}{l})}.\nonumber
\end{align} 
\end{theorem}

\begin{theorem}
Let $f:\mathbb{N}\longrightarrow \mathbb{C}$ be a completely additive function with $f(n)\neq 0$ for all $n\geq 2$ such that \begin{align}
1\ll_f \frac{f(n+1)}{f(n)}\ll_f C\log n.\nonumber
\end{align}
We have the lower bound 
\begin{align}
\mathcal{G}(x,l)^{+}_{f}\gg_f \frac{1}{C}\frac{\frac{x}{l}}{\log (\frac{x}{l})}.\nonumber
\end{align}
\end{theorem}

\begin{theorem}
Let $g:\mathbb{N}\longrightarrow \mathbb{C}$ be a multiplicative function with $g(t)\neq 0$ for all $t\in \mathbb{N}$ such that \begin{align}
1\ll_g \frac{g(n+1)}{g(n)}\ll_g 1.\nonumber
\end{align}
We have
\begin{align}
\mathcal{G}(x,l)^{+}_{g}\gg_g \frac{x}{l}.\nonumber
\end{align}
\end{theorem}

\subsection{The notations}

Throughout this paper, we consider arithmetic functions $f,g:\mathbb{N}\longrightarrow \mathbb{C}$, where by convention $g$ denotes multiplicative (resp. completely multiplicative) functions and $f$ denotes additive (resp. completely additive) functions. We keep the standard notation: $f\ll h\Leftrightarrow |f(n)|\leq Ch(n)$ for all $n\geq n_0$ for some $C>0$, and similarly $f\gg h\Leftrightarrow |f(n)|\geq Kh(n)$ for all $n\geq n_0$ for some $n_0>0$ and some absolute constant $K>0$. In the case where the absolute constant is determined by some function $T$, we denote these respective inequalities by $f\ll_T h$ and $f\gg_T h$. We denote the quantity $\mathcal{G}(x,l)^{+}_g:=\# \{n\leq x:g(n)=g(n+l),~\mathrm{for~fixed}~l>0\}$ and similarly $\mathcal{G}(x,l)^{-}_{g}:=\# \{n\leq x:g(n-l)=g(n),~\mathrm{for~fixed}~l>0\}$.

\section{Preliminary results}

In this section, we review the theory of \emph{extremal orders} for arithmetic functions. We revisit, as is essential in our studies, the notion of the \emph{maximal} and the \emph{minimal} orders of various arithmetic functions. We then leverage these concepts in the sequel to establish particular examples of the main results of this paper.

\begin{theorem}\label{euler}
Let $\varphi(n):=\sum \limits_{\substack{m\leq n\\(m,n)=1}}1$ denote the Euler totient function. We have 
\begin{align}
\varphi(n)<n \nonumber 
\end{align}
and 
\begin{align}
\varphi(n)\gg e^{-\gamma}\frac{n}{\log \log n}\nonumber
\end{align}
where $\gamma$ is the Euler-Macheroni constant.
\end{theorem}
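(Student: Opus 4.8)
The statement has two parts; the bound $\varphi(n)<n$ is immediate, since for $n\ge 2$ the integer $n$ itself lies in $[1,n]$ yet fails $(n,n)=1$, so at least one of the $n$ candidates is excluded from the count. The substance is the lower bound, which I would establish by bounding the reciprocal quantity $n/\varphi(n)$ from above. The plan rests on the Euler product
\begin{align}
\frac{\varphi(n)}{n}=\prod_{p\mid n}\left(1-\frac{1}{p}\right),\qquad \frac{n}{\varphi(n)}=\prod_{p\mid n}\frac{p}{p-1},\nonumber
\end{align}
so that proving $\varphi(n)\gg e^{-\gamma}\,n/\log\log n$ is equivalent to showing $n/\varphi(n)\ll e^{\gamma}\log\log n$.

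First I would identify the extremal configuration. Writing the distinct prime divisors of $n$ as $p_1<p_2<\cdots<p_r$ and letting $q_i$ denote the $i$-th prime, we have $p_i\ge q_i$ for each $i$; since $t\mapsto t/(t-1)$ is decreasing, each factor is maximized by replacing $p_i$ with the smallest available prime $q_i$. This yields
\begin{align}
\frac{n}{\varphi(n)}=\prod_{i=1}^{r}\frac{p_i}{p_i-1}\le \prod_{i=1}^{r}\frac{q_i}{q_i-1}=\prod_{p\le q_r}\frac{p}{p-1}.\nonumber
\end{align}
The worst case is therefore governed by a primorial, and I would invoke Mertens' third theorem $\prod_{p\le y}(1-1/p)^{-1}\sim e^{\gamma}\log y$ to conclude that the right-hand side is $\sim e^{\gamma}\log q_r$.

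It remains to convert $\log q_r$ into $\log\log n$. Since the $p_i$ are distinct, $n\ge\prod_{i=1}^{r}q_i$, hence $\log n\ge \vartheta(q_r)=\sum_{p\le q_r}\log p$. By Chebyshev's estimate (or the prime number theorem) $\vartheta(q_r)\gg q_r$, so $q_r\ll\log n$ and therefore $\log q_r\le\log\log n+O(1)=(1+o(1))\log\log n$. Combining this with the previous display gives $n/\varphi(n)\le (1+o(1))e^{\gamma}\log\log n$, that is $\varphi(n)\ge(1-o(1))e^{-\gamma}\, n/\log\log n$, which is the asserted lower bound.

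The main obstacle is quantitative rather than structural: the explicit constant $e^{-\gamma}$ forces me to use the sharp form of Mertens' theorem, with the precise factor $e^{\gamma}$, together with the asymptotic $\vartheta(y)\sim y$, rather than mere order-of-magnitude bounds. Care must also be taken with the direction of every inequality, since the argument chains an upper bound on $n/\varphi(n)$ through the reduction to primorials and through the $\vartheta$-estimate, and reversing any one link would break the conclusion. I would finally remark that the primorials $n=\prod_{p\le y}p$ turn all of these inequalities into asymptotic equalities, confirming that the constant $e^{-\gamma}$ is best possible and that the minimal order of $\varphi$ is indeed $e^{-\gamma}\, n/\log\log n$.
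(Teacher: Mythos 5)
Your proposal is correct, but there is essentially nothing in the paper to compare it against: the paper's entire ``proof'' of this theorem is the sentence ``For a proof see \cite{tenenbaum2015introduction}.'' What you have written is, in effect, the standard argument that such a reference contains, so you have supplied the content the paper delegates to the literature. Your chain of reasoning is sound: the Euler product $\varphi(n)/n=\prod_{p\mid n}(1-1/p)$, the replacement of the prime divisors $p_1<\cdots<p_r$ by the first $r$ primes using the monotonicity of $t/(t-1)$, Mertens' third theorem with the sharp constant $e^{\gamma}$, and the passage from $\log q_r$ to $\log\log n$ via $\log n\ge \vartheta(q_r)\gg q_r$. Your closing observation that primorials attain equality is also the right one, since it is what makes the constant $e^{-\gamma}$ meaningful at all (as a bare $\gg$ statement, the constant could be absorbed and the claim would be weaker than what is intended, namely the minimal order of $\varphi$). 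Two small points deserve explicit mention if you write this up fully. First, $\varphi(n)<n$ fails at $n=1$ (where $\varphi(1)=1$), so the strict inequality needs $n\ge 2$; you implicitly handle this by working with $n\ge 2$. Second, Mertens' asymptotic $\prod_{p\le q_r}(1-1/p)^{-1}\sim e^{\gamma}\log q_r$ applies only when $q_r\to\infty$; when the largest prime factor of $n$ stays bounded, $n/\varphi(n)$ is bounded while $\log\log n\to\infty$, so the claimed inequality holds trivially for large $n$ --- a one-line case distinction closes this gap.
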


\begin{proof}
For a proof, see, e.g, \cite{tenenbaum2015introduction}.
\end{proof}

\begin{theorem}\label{divisor1}
Let $\tau(n):=\sum \limits_{d|n}1$ count the number of divisors of $n$. For $n\geq 1$, we have 
\begin{align}
\tau(n)\ll n^{\frac{\log 2}{\log\log n}}\nonumber
\end{align}
and $\tau(n)\geq 2$.
\end{theorem}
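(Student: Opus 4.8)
The lower bound requires no work: for any $n\ge 2$ the integers $1$ and $n$ are distinct divisors of $n$, so $\tau(n)\ge 2$. The entire content of the statement therefore lies in the upper bound, which is the classical determination of the maximal order of the divisor function. I would establish it by exploiting multiplicativity together with a free parameter that is optimized only at the very end.

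The plan is to write $n=\prod_{p^{a}\,\|\,n}p^{a}$ and use $\tau(n)=\prod_{p^{a}\|n}(a+1)$, so that
\begin{align}
\log\tau(n)=\sum_{p^{a}\|n}\log(a+1).\nonumber
\end{align}
I would then introduce a parameter $\delta>0$, to be chosen as a function of $n$, and split the sum according to whether $p\ge 2^{1/\delta}$ or $p<2^{1/\delta}$. In the first range $p^{\delta}\ge 2$, whence $\log(a+1)\le a\log 2\le a\delta\log p$; summing over these primes bounds their contribution by $\delta\sum_{p^{a}\|n}a\log p\le\delta\log n$. In the second range I would use the crude estimate $a\le\log n/\log p\le\log n/\log 2$, so that each term satisfies $\log(a+1)\ll\log\log n$, and since there are at most $2^{1/\delta}$ primes below $2^{1/\delta}$ the total contribution of this range is $\ll 2^{1/\delta}\log\log n$. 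Combining the two ranges gives
\begin{align}
\log\tau(n)\le\delta\log n+O\!\left(2^{1/\delta}\log\log n\right).\nonumber
\end{align}

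It then remains to balance the two terms by choosing $\delta$. Taking $\delta=(1+\varepsilon)\tfrac{\log 2}{\log\log n}$ for a fixed $\varepsilon>0$ makes $2^{1/\delta}=(\log n)^{1/(1+\varepsilon)}$, so the error term becomes $(\log n)^{1/(1+\varepsilon)}\log\log n=o(\log n/\log\log n)$, while the main term equals $(1+\varepsilon)\log 2\cdot\log n/\log\log n$. Letting $\varepsilon\to 0$ yields $\log\tau(n)\le(1+o(1))\log 2\cdot\log n/\log\log n$, which is precisely the asserted order $\tau(n)\ll n^{\log 2/\log\log n}$. The one genuinely delicate point — and the main obstacle — is the calibration of $\delta$: it must be large enough that the finitely many small primes cannot contribute more than a lower-order amount, yet small enough that the large-prime term does not overshoot the target exponent, and the value $\delta\asymp\log 2/\log\log n$ is exactly where these competing requirements meet. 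I would finally remark that the $(1+o(1))$ factor in the exponent is unavoidable, since the bound is attained up to that factor by primorials $n=\prod_{p\le y}p$, confirming that $\log 2/\log\log n$ is the true maximal order.
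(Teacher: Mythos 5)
You cannot be compared against the paper's own argument here, because the paper does not give one: its ``proof'' of this theorem is a one-line citation to Tenenbaum's book. Your proposal supplies the missing argument, and it is the classical Wigert-style proof (essentially what the cited reference contains): writing $\log\tau(n)=\sum_{p^a\| n}\log(a+1)$, splitting the primes at $2^{1/\delta}$, bounding the large-prime contribution by $\delta\log n$ via $\log(a+1)\le a\log 2\le a\delta\log p$ and $\sum_{p^a\|n}a\log p=\log n$, bounding each of the at most $2^{1/\delta}$ small-prime terms by $O(\log\log n)$ using $a\le\log n/\log 2$, and then calibrating $\delta\asymp\log 2/\log\log n$. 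Every one of these steps checks out, and your discussion of why that choice of $\delta$ is forced is exactly right. The lower bound is also handled correctly, with the small observation that it needs $n\ge 2$ rather than the paper's $n\ge 1$, since $\tau(1)=1$.

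The one point you should sharpen is the final identification of your bound with the paper's statement. What your argument proves is $\tau(n)\le n^{(1+o(1))\log 2/\log\log n}$. Under the paper's own definition of $\ll$ (a single constant $C$ valid for all $n\ge n_0$), the assertion $\tau(n)\ll n^{\log 2/\log\log n}$ is strictly stronger --- and it is in fact false. Your closing remark about primorials almost makes this point but stops short: for $n=\prod_{p\le y}p$ one has $\log\tau(n)=\pi(y)\log 2$ and $\log n=\theta(y)$, and since the prime number theorem gives $\pi(y)\log y-\theta(y)\gg y/\log y$, one obtains
\begin{align}
\frac{\tau(n)}{n^{\log 2/\log\log n}}\ \ge\ \exp\left(c\,\frac{\log n}{(\log\log n)^2}\right)\ \longrightarrow\ \infty .\nonumber
\end{align}
So primorials do not merely show that the constant $\log 2$ in the exponent is optimal; they show that the $(1+o(1))$ factor cannot be deleted at all, i.e. that no absolute implied constant exists. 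Consequently your sentence ``which is precisely the asserted order'' concedes too much: you have proved the correct theorem, and it is the paper's formulation (taken literally with its stated meaning of $\ll$) that is imprecise and should be read as $\tau(n)\le n^{(1+o(1))\log 2/\log\log n}$.
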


\begin{proof}
For a proof, see, e.g., \cite{tenenbaum2015introduction}.
\end{proof}

\begin{theorem}\label{multiplicity}
Let $\Omega(n)=\sum \limits_{p||n}1$ count the prime divisors of $n$ with multiplicity. For $n\geq 1$, we have $\Omega(n)\geq 1$ and 
\begin{align}
\Omega(n)\ll \frac{\log n}{\log 2}.\nonumber
\end{align}
\end{theorem}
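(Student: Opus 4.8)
The plan is to exploit the fundamental theorem of arithmetic together with the elementary observation that $2$ is the smallest prime, which forces every prime power dividing $n$ to inflate $n$ by a factor of at least $2$. Writing the canonical factorization $n=\prod_{i=1}^{k} p_i^{a_i}$ with distinct primes $p_i$ and exponents $a_i\geq 1$, the quantity $\Omega(n)$ is precisely the number of prime factors counted with multiplicity, so that $\Omega(n)=\sum_{i=1}^{k} a_i$. The entire upper bound then hinges on a single inequality, which I would record first.

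The key step is to bound $n$ from below in terms of $\Omega(n)$. Since $p_i\geq 2$ for every $i$, I would write
\begin{align}
n=\prod_{i=1}^{k} p_i^{a_i}\geq \prod_{i=1}^{k} 2^{a_i}=2^{\sum_{i=1}^{k} a_i}=2^{\Omega(n)}.\nonumber
\end{align}
Taking logarithms of both sides and rearranging immediately yields $\Omega(n)\log 2\leq \log n$, hence $\Omega(n)\leq \frac{\log n}{\log 2}$, which is the claimed bound $\Omega(n)\ll \frac{\log n}{\log 2}$ (indeed with implied constant $1$). For the lower bound, I would simply invoke that any integer $n\geq 2$ possesses at least one prime divisor, so that the factorization is nonempty and $\Omega(n)\geq a_1\geq 1$.

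There is no serious obstacle here; the result is essentially a repackaging of $n\geq 2^{\Omega(n)}$. The only point requiring a word of care is the edge case: strictly speaking $\Omega(1)=0$ (the empty product), so the assertion $\Omega(n)\geq 1$ should be read as holding for $n\geq 2$, and the asymptotic upper bound is vacuous at $n=1$ since $\log 1=0$. I would note this convention and then treat $n\geq 2$ throughout, after which the two displayed estimates follow as above.
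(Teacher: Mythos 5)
Your proof is correct, but note that the paper itself offers no argument for this statement at all: its ``proof'' is a one-line citation to Tenenbaum's book. You have therefore supplied what the paper outsources, namely the standard elementary argument. Writing $n=\prod_{i=1}^{k}p_i^{a_i}$ and using $p_i\geq 2$ to get $n\geq 2^{\Omega(n)}$, hence $\Omega(n)\leq \frac{\log n}{\log 2}$, is exactly the textbook route, and your derivation of it is complete, with the implied constant even made explicit (it is $1$). You also caught a genuine defect in the statement as printed: $\Omega(1)=0$, so the claim $\Omega(n)\geq 1$ cannot hold ``for $n\geq 1$'' and must be read as holding for $n\geq 2$; likewise the upper bound is vacuous at $n=1$ since $\log 1=0$. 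This is a correction the paper silently needs, since the theorem is later applied to ratios $\Omega(n+1)/\Omega(n)$ where positivity of the denominator matters. In short: your approach is the standard one that the cited reference would give, it is self-contained where the paper is not, and it repairs the edge case the paper overlooks.
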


\begin{proof}
For a proof, see, e.g., \cite{tenenbaum2015introduction}.
\end{proof}

\begin{theorem}\label{sumdivisor}
Let $\sigma(n):=\sum \limits_{d|n}d$ count the number of divisors of $n$. We have $\sigma(n)\geq n$ and 
\begin{align}
\sigma(n)\ll e^{\gamma}n\log\log n.\nonumber
\end{align}
\end{theorem}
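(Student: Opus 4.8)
The lower bound is immediate: since $n$ is itself a divisor of $n$ and every term in $\sum_{d\mid n} d$ is positive, we have $\sigma(n) \geq n$. The substance of the theorem lies in the upper bound, and my plan is to reduce it to the minimal-order estimate for $\varphi$ already recorded in Theorem \ref{euler}, so that the delicate constant $e^{\gamma}$ need not be produced from scratch.

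First I would pass from $\sigma(n)$ to the normalized quantity $\sigma(n)/n$. Reindexing the divisor sum by $d \mapsto n/d$ gives $\sigma(n)/n = \sum_{d\mid n} 1/d$, and since $\sigma$ is multiplicative this factors as $\sigma(n)/n = \prod_{p^a \| n}(1 + p^{-1} + \cdots + p^{-a})$. Each local factor is a finite geometric sum, so bounding $1 + p^{-1} + \cdots + p^{-a} = \frac{1 - p^{-(a+1)}}{1 - p^{-1}} < (1 - p^{-1})^{-1}$ term by term yields $\sigma(n)/n < \prod_{p\mid n}(1 - p^{-1})^{-1}$, where the product now ranges only over the distinct prime divisors of $n$.

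The second step is to recognize the resulting product as $n/\varphi(n)$. Indeed, from the Euler product $\varphi(n) = n\prod_{p\mid n}(1 - p^{-1})$ we obtain $\prod_{p\mid n}(1 - p^{-1})^{-1} = n/\varphi(n)$, so that $\sigma(n) < n^2/\varphi(n)$. Invoking the lower bound $\varphi(n) \gg e^{-\gamma} n/\log\log n$ from Theorem \ref{euler} then gives $n/\varphi(n) \ll e^{\gamma}\log\log n$, and multiplying this estimate through by $n$ delivers $\sigma(n) \ll e^{\gamma} n \log\log n$, as desired.

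Since each step is elementary, I do not anticipate a genuine obstacle; the only point demanding care is the inequality $\sigma(n)/n \le n/\varphi(n)$, where one must check that the geometric-sum truncation is handled uniformly in the exponent $a$ across all prime powers dividing $n$. Once that is in hand, the tightness of the constant $e^{\gamma}$ is inherited entirely from Theorem \ref{euler} rather than re-derived, which keeps the argument short.
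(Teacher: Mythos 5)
Your proof is correct, but it takes a genuinely different route from the paper, because the paper does not actually prove this statement: its ``proof'' is a one-line citation to Tenenbaum \cite{tenenbaum2015introduction}, where the maximal order of $\sigma$ (Gronwall's theorem) is established directly by comparing $\prod_{p\mid n}(1-1/p)^{-1}$ with Mertens' theorem. What you do instead is reduce the statement to Theorem \ref{euler}, which the paper has already recorded, via the classical duality $\sigma(n)\varphi(n)\leq n^2$: the chain $\sigma(n)/n=\sum_{d\mid n}1/d=\prod_{p^a\| n}\left(1+p^{-1}+\cdots+p^{-a}\right)<\prod_{p\mid n}\left(1-p^{-1}\right)^{-1}=n/\varphi(n)$ is valid, the geometric-sum bound is uniform in the exponent $a$ as you note, and combining with $\varphi(n)\gg e^{-\gamma}n/\log\log n$ gives $\sigma(n)<n^2/\varphi(n)\ll e^{\gamma}n\log\log n$. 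The lower bound $\sigma(n)\geq n$ is trivial exactly as you say. Your approach buys self-containedness: the paper's preliminary section would then rest on a single imported estimate (the minimal order of $\varphi$) rather than two independent citations, and the constant $e^{\gamma}$ is inherited rather than re-derived, since only the upper-bound direction of the transfer is needed and that direction is airtight. What the citation route buys instead is the genuinely sharp form of both extremal orders ($\limsup \sigma(n)/(n\log\log n)=e^{\gamma}$, not merely $\ll$), which your reduction reproduces only in the one-sided sense the theorem actually asserts; given that the paper's $\ll$ and $\gg$ notation absorbs constants anyway, this loses nothing relative to the statement being proved.
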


\begin{proof}
For a proof, see, for example, the book of Tenenbaum \cite{tenenbaum2015introduction}.
\end{proof}

\begin{theorem}\label{divisor2}
Let $\omega(n):=\sum \limits_{p|n}1$ count the number of distinct prime divisors of $n$. We have the inequality 
\begin{align}
1\leq \omega(n)\ll \frac{\log n}{\log\log n}.\nonumber
\end{align}
\end{theorem}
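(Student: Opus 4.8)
The lower bound is immediate: every integer $n \geq 2$ has at least one prime factor, so $\omega(n) \geq 1$. The substance lies in the upper bound, and here the plan is to piggyback on the divisor-function estimate already established in Theorem \ref{divisor1} rather than argue from scratch.

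The key elementary observation is that $\tau$ dominates a power of $2$ driven by $\omega$. Writing $n = p_1^{a_1}\cdots p_k^{a_k}$ with $k = \omega(n)$ distinct primes and each $a_i \geq 1$, one has
\begin{align}
\tau(n) = \prod_{i=1}^{k}(a_i + 1) \geq 2^{\omega(n)},\nonumber
\end{align}
since each factor satisfies $a_i + 1 \geq 2$. Combining this with Theorem \ref{divisor1} and rewriting its right-hand side in base $2$ via the identity $n^{\log 2/\log\log n} = 2^{\log n/\log\log n}$, I would obtain
\begin{align}
2^{\omega(n)} \leq \tau(n) \ll 2^{\frac{\log n}{\log\log n}}.\nonumber
\end{align}
Taking logarithms to base $2$ then collapses the implied constant into an additive term and yields $\omega(n) \ll \frac{\log n}{\log\log n}$, as desired.

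The only point requiring care is the bookkeeping with the implied constant: the $\ll$ in Theorem \ref{divisor1} hides a multiplicative constant $C$, so after taking logarithms one is left with $\omega(n) \leq \log_2 C + \frac{\log n}{\log\log n}$, and one must absorb the $\log_2 C$ term into the asymptotic bound, which is legitimate for $n$ large enough that $\log\log n$ is positive and bounded away from $0$. I expect this—rather than any genuine difficulty—to be essentially the whole obstacle.

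For completeness I would also note a self-contained alternative that bypasses Theorem \ref{divisor1}: since the smallest integer with $k$ distinct prime factors is the primorial $\prod_{i\leq k} p_i$, any $n$ with $\omega(n) = k$ satisfies $\log n \geq \sum_{i\leq k}\log p_i \gg k\log k$ by Chebyshev/prime-counting estimates. Inverting the relation $k\log k \ll \log n$ then gives $\log k \asymp \log\log n$, whence $k = \tfrac{k\log k}{\log k} \ll \tfrac{\log n}{\log\log n}$. Since this inversion step is precisely the part needing a short auxiliary argument, the divisor-function route is cleaner given the results already in hand, and that is the one I would carry out in detail.
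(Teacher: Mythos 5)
Your argument is correct, but it is worth noting that the paper itself offers no proof of this statement at all: it simply cites Montgomery--Vaughan, just as it cites Tenenbaum for the other extremal-order facts in that section. So your proposal is necessarily a different route --- it is an actual derivation, and a nice one: the chain $2^{\omega(n)} \leq \tau(n) \ll n^{\log 2/\log\log n} = 2^{\log n/\log\log n}$ is valid, the base-$2$ rewriting is exact, and taking logarithms does reduce the whole matter to absorbing $\log_2 C$ into the asymptotic constant, which is legitimate under the paper's convention that $\ll$ only requires the inequality for $n \geq n_0$. What your approach buys is self-containment relative to the paper's own toolkit: you deduce Theorem \ref{divisor2} from Theorem \ref{divisor1}, so only one external citation is needed instead of two (though of course Theorem \ref{divisor1} is itself only cited, so the dependence on the literature is displaced rather than eliminated). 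The primorial argument you sketch as an alternative is essentially the proof one finds in the cited references; your observation that its inversion step ($k\log k \ll \log n$ implies $k \ll \log n/\log\log n$) needs a short auxiliary argument is accurate, and your preference for the divisor-function route is reasonable. One pedantic caveat inherited from the paper's statement rather than your proof: the claimed lower bound $1 \leq \omega(n)$ fails at $n=1$ (where $\omega(1)=0$), so both the statement and your opening sentence implicitly require $n \geq 2$.
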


\begin{proof}
For a proof, see, e.g., \cite{May}.
\end{proof}

\section{Lower bound}

In this section, we study the underlying problem in the context of functions $f:\mathbb{N}\longrightarrow \mathbb{C}$ with the property $f(mn)=f(n)+f(m)$ and those of the form $g:\mathbb{N}\longrightarrow \mathbb{C}$ with the property $g(mn)=g(m)g(n)$. We estimate from below the size of each of these sets under a given arithmetic function.

\begin{theorem}\label{Type1}
Let $g:\mathbb{N}\longrightarrow \mathbb{C}$ be a multiplicative function with $g(t)\neq 0$ for $t\in \mathbb{N}$ such that \begin{align}
1\ll_g\frac{g(n+1)}{g(n)}\ll_g C\log \log n\nonumber
\end{align}
for an absolute constant $C>0$. We have 
\begin{align}
\mathcal{G}(x,l)^{+}_{g}\gg_g \frac{1}{C}\frac{\frac{x}{l}}{\log\log (\frac{x}{l})}.\nonumber
\end{align} 
\end{theorem}

\begin{proof}
Clearly, we can write
\begin{align}
\mathcal{G}(x,l)^{+}_{g}&=\# \{n\leq x:g(n)=g(n+l),~\mathrm{for~fixed}~l>0\}\nonumber \\& \geq \#\left \{n\leq x:g(n)=g(n+l),~l|n,~~\mathrm{for~fixed}~l>0\right \}.\label{ge1} 
\end{align}
We observe that 
\begin{align}
\eqref{ge1}&=\#\left \{n\leq x:g(n)=g(n+l),~l|n,~\mathrm{for~fixed}~l>0,n=l\cdot m,~(l,m)=1\right\}\nonumber \\&+\#\left \{n\leq x:g(n)=g(n+l),~l|n,~\mathrm{for~fixed}~l>0,~n=l\cdot m,~(l,m)\neq 1\right\}\nonumber
\end{align}
so that inequality \eqref{ge1} can be further controlled from below by the quantity
\begin{align}
&\geq \#\left \{m\leq \frac{x}{l}:g(m)g(l)=g(l)g(m+1),~(m,l)=(m+1,l)=1,~\text{for~fixed}~l>0\right\}\label{geq1}
\end{align}
Under requirement $g(t)\neq 0$ for all $t\in \mathbb{N}$ such that \begin{align}
1\ll_g\frac{g(n+1)}{g(n)}\ll_g C\log\log n\nonumber
\end{align}
we deduce
\begin{align}
\eqref{geq1}&=\#\left \{m\leq \frac{x}{l}:g(m)=g(m+1),~\text{for~fixed}~l>0\right\}\nonumber \\&=\sum \limits_{\substack{m\leq \frac{x}{l}\\g(m)=g(m+1)}}1\nonumber \\&\gg \frac{1}{C}\sum \limits_{\substack{m\leq \frac{x}{l}\\g(m+1)=g(m)}}\left(\frac{g(m+1)}{g(m)}\right)\left(\frac{1}{\log\log m}\right)\nonumber \\&\gg_g \frac{1}{C}\sum \limits_{m\leq \frac{x}{l}}\frac{1}{\log\log m}\nonumber
\end{align}
and the lower bound follows immediately.
\end{proof}

\begin{remark}
We particularize the result in Theorem \ref{Type1} by varying our arithmetic functions in the following sequel.
\end{remark}

\begin{corollary}
Let $\varphi(n)=\sum \limits_{\substack{m\leq n\\(m,n)=1}}1$ denote the Euler totient function. We have the lower bound 
\begin{align}
\mathcal{G}(x,l)^{+}_{\varphi}\gg e^{-\gamma}\frac{\frac{x}{l}}{\log \log (\frac{x}{l})}.\nonumber
\end{align}
\end{corollary}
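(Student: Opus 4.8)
The plan is to obtain the corollary as a direct instance of Theorem~\ref{Type1} applied to the multiplicative function $g=\varphi$, so that the only genuine task is to verify the hypotheses of that theorem for the Euler totient. First I would record that $\varphi$ is multiplicative and that $\varphi(t)\geq 1>0$ for every $t\in\mathbb{N}$, so that $\varphi(t)\neq 0$ throughout; this legitimizes the division $\varphi(m)/\varphi(m+1)$ that drives the proof of Theorem~\ref{Type1}.

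The heart of the argument is to check the growth condition $\varphi(n+1)/\varphi(n)\ll C\log\log n$ with an explicit constant $C$. For the numerator I would invoke the upper bound $\varphi(n+1)<n+1$ from Theorem~\ref{euler}, and for the denominator the minimal-order estimate $\varphi(n)\gg e^{-\gamma}\,n/\log\log n$ from the same theorem. Dividing these gives
\begin{align}
\frac{\varphi(n+1)}{\varphi(n)}\ll \frac{n+1}{e^{-\gamma}\,n/\log\log n}=e^{\gamma}\,\frac{n+1}{n}\,\log\log n\ll e^{\gamma}\log\log n,\nonumber
\end{align}
since $(n+1)/n$ is bounded. Hence the hypothesis of Theorem~\ref{Type1} holds with $C=e^{\gamma}$.

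With this value of $C$ in hand I would apply Theorem~\ref{Type1} verbatim to conclude
\begin{align}
\mathcal{G}(x,l)^{+}_{\varphi}\gg \frac{1}{C}\frac{\frac{x}{l}}{\log\log(\frac{x}{l})}=e^{-\gamma}\frac{\frac{x}{l}}{\log\log(\frac{x}{l})},\nonumber
\end{align}
which is precisely the claimed bound.

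The one point demanding care -- the main obstacle, such as it is -- is the use of the minimal-order lower bound on $\varphi$. The extremal estimate $\varphi(n)\gg e^{-\gamma}\,n/\log\log n$ must be read as a genuine pointwise inequality valid for all sufficiently large $n$ (which is what Theorem~\ref{euler} provides, via the Rosser--Schoenfeld-type bound), rather than merely along the subsequence realizing the $\liminf$; only then does the ratio estimate, and with it the constant $C=e^{\gamma}$ that produces the factor $e^{-\gamma}$ in the conclusion, hold uniformly in $n$. Everything else is a mechanical substitution into Theorem~\ref{Type1}.
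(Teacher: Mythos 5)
Your proposal is correct and takes essentially the same route as the paper: both verify the hypothesis of Theorem \ref{Type1} with $C=e^{\gamma}$ by combining the two estimates of Theorem \ref{euler} (namely $\varphi(n+1)<n+1$ and $\varphi(n)\gg e^{-\gamma}n/\log\log n$) and then invoke Theorem \ref{Type1} directly. Your write-up simply makes explicit the division step, the non-vanishing of $\varphi$, and the pointwise (all sufficiently large $n$) reading of the minimal-order bound, all of which the paper leaves implicit.
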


\begin{proof}
By Theorem \ref{euler}, the ratio 
\begin{align}
\frac{\varphi(n+1)}{\varphi(n)}\ll e^{\gamma}\log\log n\nonumber
\end{align}
is satisfied and the lower bound follows immediately from Theorem \ref{Type1}.
\end{proof}

\begin{corollary}
Let $\sigma(n):=\sum \limits_{d|n}d$ denote the sum-of-divisors function. We have the lower bound 
\begin{align}
\mathcal{G}(x,l)^{+}_{\sigma}\gg e^{-\gamma}\frac{\frac{x}{l}}{\log \log (\frac{x}{l})}.\nonumber
\end{align}
\end{corollary}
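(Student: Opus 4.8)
The plan is to derive this corollary exactly as the preceding one for $\varphi$, namely by verifying that $\sigma$ meets the hypotheses of Theorem \ref{Type1} with the explicit constant $C=e^{\gamma}$. First I would record the two structural facts needed to apply that theorem: $\sigma$ is multiplicative, and it never vanishes. The non-vanishing is immediate from Theorem \ref{sumdivisor}, which gives $\sigma(n)\geq n\geq 1$ for every $n\in\mathbb{N}$, so the requirement $g(t)\neq 0$ is satisfied.

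The one substantive step is to control the ratio of consecutive values, and here I would play the two extremal estimates of Theorem \ref{sumdivisor} against each other: the maximal order in the numerator and the trivial lower bound in the denominator. Concretely, using $\sigma(n+1)\ll e^{\gamma}(n+1)\log\log(n+1)$ together with $\sigma(n)\geq n$, one writes
\begin{align}
\frac{\sigma(n+1)}{\sigma(n)}\ll \frac{e^{\gamma}(n+1)\log\log(n+1)}{n}=e^{\gamma}\Bigl(1+\tfrac{1}{n}\Bigr)\log\log(n+1).\nonumber
\end{align}
Since $1+\tfrac{1}{n}\to 1$ and $\log\log(n+1)\sim\log\log n$ as $n\to\infty$, the right-hand side is $\ll e^{\gamma}\log\log n$, which is precisely the growth hypothesis of Theorem \ref{Type1} with $C=e^{\gamma}$.

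Having verified the hypotheses, I would simply feed $g=\sigma$ and $C=e^{\gamma}$ into Theorem \ref{Type1}, whose conclusion then reads $\mathcal{G}(x,l)^{+}_{\sigma}\gg \tfrac{1}{e^{\gamma}}\tfrac{x/l}{\log\log(x/l)}$, and rewrite $\tfrac{1}{e^{\gamma}}=e^{-\gamma}$ to match the stated bound.

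I do not anticipate a genuine obstacle, as the argument is a direct specialization of Theorem \ref{Type1}. The only points deserving a little care are the handling of the asymptotic relation $\log\log(n+1)\sim\log\log n$ within the $\ll$ notation, and the bookkeeping of how the constant $e^{\gamma}$ governing the maximal order of $\sigma$ propagates, as its reciprocal, into the final lower bound.
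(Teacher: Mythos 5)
Your proposal is correct and follows essentially the same route as the paper: both verify the ratio bound $\frac{\sigma(n+1)}{\sigma(n)}\ll e^{\gamma}\log\log n$ from Theorem \ref{sumdivisor} and then invoke Theorem \ref{Type1} with $C=e^{\gamma}$. The only difference is that you spell out the intermediate step of playing the maximal order $\sigma(n+1)\ll e^{\gamma}(n+1)\log\log(n+1)$ against the trivial bound $\sigma(n)\geq n$, which the paper leaves implicit.
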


\begin{proof}
By Theorem \ref{sumdivisor}, we have
\begin{align}
\frac{\sigma(n+1)}{\sigma(n)}\ll e^{\gamma}\log\log n.\nonumber
\end{align}
The lower bound follows by using Theorem \ref{Type1}.
\end{proof}

\begin{theorem}\label{Type2}
Let $g:\mathbb{N}\longrightarrow \mathbb{C}$ be a completely multiplicative function with $g(t)\neq 0$ for all $t\in \mathbb{N}$ and suppose that 
\begin{align}
1\ll_g \frac{g(n+1)}{g(n)}\ll_g 1.\nonumber
\end{align}
 We have
\begin{align}
\mathcal{G}(x,l)^{+}_{g}\gg_g \frac{x}{l}.\nonumber
\end{align}
\end{theorem}

\begin{proof}
Clearly, we can write 
\begin{align}
\mathcal{G}(x,l)^{+}_{g}&=\#\{n\leq x:g(n)=g(n+l),~\mathrm{for~fixed}~l>0\}\nonumber \\& \geq \#\left\{n\leq x:g(n)=g(n+l),~l|n,~\text{for~fixed}~l>0,\right \}.\label{ge2}
\end{align}
We observe that 
\begin{align}
\eqref{ge2}&=\#\left \{n\leq x:g(n)=g(n+l),~l|n,~\text{for~fixed}~l>0,~n=l\cdot m,~(m,l)=1\right\}\nonumber \\&+\#\left \{n\leq x:g(n)=g(n+l),~l|n,~\text{for~fixed}~l>0,~n=l\cdot m,~(l,m)\neq 1\right\}\nonumber
\end{align}
so that inequality \eqref{ge2} can be further written in the form
\begin{align}
&=\#\left\{m\leq \frac{x}{l}:g(m)=g(m+1),~\text{for~fixed}~l>0\right\}.\label{geq2} 
\end{align}
Under condition $g(t)\neq 0$ for all $t\in \mathbb{N}$ and 
$$
1\ll_g \frac{g(n+1)}{g(n)}\ll_g 1.
$$
we deduce
\begin{align}
\eqref{geq2}&=\sum \limits_{\substack{m\leq \frac{x}{l}\\g(m)=g(m+1)}}1\nonumber \\& \gg_g\sum \limits_{\substack{m\leq \frac{x}{l}\\g(m+1)=g(m)}}\frac{g(m+1)}{g(m)}\nonumber \\&\gg_g \sum\limits_{m\leq \frac{x}{l}}1\nonumber
\end{align}
and the lower bound follows immediately.
\end{proof}
\bigskip

Theorem \ref{Type2} can be useful in practice. It suggests that multiplicative functions obeying the underlying conditions with the correlation 
\begin{align}
\sum \limits_{n\leq x}g(n)g(n+l)\nonumber
\end{align}
for a fixed $l>0$ can be well approximated by the partial sum of the corresponding square function 
\begin{align}
\sum \limits_{n\leq x}g(n)^2.\nonumber
\end{align}
\bigskip

Now, we extend our result to multiplicative functions whose consecutive ratio grows by a poly-logarithmic power saving of a logarithm. We will make these statements more precise. It is important to note that these results also hold when we replace a multiplicative function with an additive function.

\begin{theorem}\label{Type3}
Let $f:\mathbb{N}\longrightarrow \mathbb{C}$ be an additive function with $f(n)\neq 0$ for all $n\geq 2$ such that
\begin{align}
1\ll_f\frac{f(n+1)}{f(n)}\ll_f \frac{\log n}{(\log \log n)^c}\nonumber
\end{align}
for some $c>0$. We have 
\begin{align}
\mathcal{G}(x,l)^{+}_{f}\gg_f \frac{x}{l}\cdot \frac{(\log \log (\frac{x}{l}))^c}{\log (\frac{x}{l})}.\nonumber
\end{align}
\end{theorem}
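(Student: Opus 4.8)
The plan is to follow the template of the proof of Theorem \ref{Type1}, replacing the multiplicative relation $g(ml)=g(m)g(l)$ by the additive relation $f(ml)=f(m)+f(l)$. First I would discard every $n\leq x$ that is not divisible by $l$, writing $n=ml$ with $m\leq x/l$ and imposing the coprimality conditions $(m,l)=(m+1,l)=1$ so that additivity may be applied. On this restricted set $f(n)=f(ml)=f(m)+f(l)$ and $f(n+l)=f((m+1)l)=f(m+1)+f(l)$, so after cancelling $f(l)$ the defining relation $f(n)=f(n+l)$ collapses to $f(m)=f(m+1)$. This yields
\begin{align}
\mathcal{G}(x,l)^{+}_{f}\geq \#\left\{m\leq \tfrac{x}{l}: f(m)=f(m+1)\right\}=\sum_{\substack{m\leq x/l\\ f(m)=f(m+1)}}1.\nonumber
\end{align}

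Next, performing the identical manipulation used in Theorem \ref{Type1} — replacing the indicator of the relation $f(m)=f(m+1)$ by the ratio $f(m)/f(m+1)$ — I would pass to $\sum_{m\leq x/l} f(m)/f(m+1)$ and then invert the hypothesis. The bound $f(n+1)/f(n)\ll \frac{\log n}{(\log\log n)^c}$ gives the pointwise lower estimate
\begin{align}
\frac{f(m)}{f(m+1)}\gg \frac{(\log\log m)^c}{\log m},\nonumber
\end{align}
so that $\mathcal{G}(x,l)^{+}_{f}\gg \sum_{m\leq x/l}\frac{(\log\log m)^c}{\log m}$.

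It remains to bound this last sum from below, and this is the step demanding the most care. The summand $h(t)=(\log\log t)^c/\log t$ is \emph{eventually decreasing} (its logarithmic derivative is $\frac{1}{t\log t}\bigl(\frac{c}{\log\log t}-1\bigr)<0$ once $\log\log t>c$), so I cannot compare term-by-term against its right-endpoint value over the whole range. Instead I would retain only the dyadic block $x/2l<m\leq x/l$, on which monotonicity forces each term to be at least $h(x/l)$, while the block contains $\gg x/l$ integers; hence
\begin{align}
\sum_{m\leq x/l}\frac{(\log\log m)^c}{\log m}\gg \frac{x}{l}\cdot\frac{(\log\log(\tfrac{x}{l}))^c}{\log(\tfrac{x}{l})},\nonumber
\end{align}
which is the asserted bound (equivalently one may integrate $h$ against $\sum_{m\leq t}1\sim t$ by partial summation).

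The main obstacle I anticipate is precisely this sum estimate: since the weight decreases, one must localise to a dyadic tail rather than bound the full sum by its extreme term. A secondary point that deserves verification is the coprimality restriction: I am implicitly claiming that the $m\leq x/l$ with $(m,l)=(m+1,l)=1$ form a positive proportion, which holds with density $\prod_{p\mid l}(1-2/p)$ and so costs only a constant factor at fixed $l$ — although this already signals that the reduction via additivity is delicate for $l$ divisible by small primes, a point the analogous multiplicative proofs also leave implicit.
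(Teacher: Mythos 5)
Your proposal reproduces the paper's own proof almost verbatim (restriction to $l\mid n$, coprimality to invoke additivity, collapse to $f(m)=f(m+1)$, then the ratio sum), with a more careful final summation; but both versions founder on the same step, and it is a genuine gap, not a technicality. The move you describe as ``performing the identical manipulation used in Theorem \ref{Type1} --- replacing the indicator of the relation $f(m)=f(m+1)$ by the ratio $f(m)/f(m+1)$'' is not a legitimate manipulation at all. The quantity to be bounded from below is the sum of indicators
\begin{align}
\#\left\{m\leq \tfrac{x}{l}: f(m)=f(m+1)\right\}=\sum_{m\leq x/l}\mathbf{1}\left[f(m)=f(m+1)\right],\nonumber
\end{align}
and there is no pointwise inequality of the form $\mathbf{1}\left[f(m)=f(m+1)\right]\gg f(m)/f(m+1)$: the ratio is typically nonzero (often close to $1$, or large) at every $m$ where the indicator vanishes, so $\sum_{m\leq x/l}f(m)/f(m+1)$ is not a lower bound for the count --- if anything, the inequality runs the other way when the ratios are nonnegative. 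Everything downstream of this substitution (inverting the hypothesis, the dyadic-block estimate, which is sound in itself) is therefore estimating the wrong quantity.

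Moreover, no repair of this step is possible, because the statement itself is false. Take $f(n)=\log n$: it is completely additive, $f(n)\neq 0$ for $n\geq 2$, and $f(n+1)/f(n)=\log(n+1)/\log n\ll 1\ll \log n/(\log\log n)^{c}$, so every hypothesis holds; yet $f(n)=f(n+l)$ would force $n=n+l$, so $\mathcal{G}(x,l)^{+}_{f}=0$ identically, contradicting the claimed positive lower bound. Your secondary worry about coprimality is also sharper than you state: when $l$ is even, the factor at $p=2$ in your density $\prod_{p\mid l}(1-2/p)$ is zero --- one of $m,m+1$ is always even --- so the set of admissible $m$ is empty and the reduction yields only the trivial bound $\mathcal{G}(x,l)^{+}_{f}\geq 0$. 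In short, the one step you flagged as the main obstacle (the summation of a decreasing weight) is the one step that is fine; the fatal defect is the indicator-to-ratio substitution inherited from the paper, and behind it the falsity of the theorem as stated.
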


\begin{proof}
Clearly, we can write 
\begin{align}
\mathcal{G}(x,l)^{+}_{f}&=\#\{n\leq x:f(n)=f(n+l),~\text{for~fixed}~l>0\}\nonumber \\& \geq \#\left\{n\leq x:f(n)=f(n+l),~l|n,~\text{for~fixed}~l>0\right \}.\label{ge3}
\end{align}
We can write 
\begin{align}
\eqref{ge3}&=\#\left \{n\leq x:f(n)=f(n+l),~l|n,~\text{for~fixed}~l>0,~n=l\cdot m,~(m,l)=1\right\}\nonumber \\&+\#\left \{n\leq x:f(n)=f(n+l),~l|n,~\text{for~fixed}~l>0,~n=l\cdot m,~(l,m)\neq 1\right\}\nonumber
\end{align}
so that under the requirements $f(n)\neq 0$ for all $n\geq 2$ and 
$$
1\ll_f\frac{f(n+1)}{f(n)}\ll_f \frac{\log n}{(\log \log n)^c}
$$
for some constant $c>0$, we deduce 
\begin{align}
\eqref{ge3}&\geq \#\left\{n\leq x:f(m)+f(l)=f(l)+f(m+1),(m,l)=(m+1,l)=1,~\text{for~fixed}~l>0\right\}\nonumber \\&=\#\left\{m\leq \frac{x}{l}:f(m)=f(m+1),~\text{for~fixed}~l>0\right \}\nonumber \\&=\sum \limits_{\substack{m\leq \frac{x}{l}\\f(m)=f(m+1)}}1\nonumber \\& \gg_f \sum \limits_{\substack{m\leq \frac{x}{l}\\f(m+1)=f(m)}}\left(\frac{f(m+1)}{f(m)}\right)\left(\frac{(\log \log n)^c}{\log n}\right)\nonumber \\&\gg \sum \limits_{m\leq \frac{x}{l}}\frac{(\log \log n)^c}{\log n}\nonumber
\end{align}
The claimed lower bound can be deduced from the partial sum.
\end{proof}

\begin{corollary}
Let $\omega(n):=\sum \limits_{p|n}1$ count the number of distinct prime divisors of $n$. We have the lower bound 
\begin{align}
\mathcal{G}(x,l)^{+}_{\omega}\gg \frac{x}{l}\cdot \frac{(\log \log (\frac{x}{l}))}{\log (\frac{x}{l})}.\nonumber
\end{align}
\end{corollary}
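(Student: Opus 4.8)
The plan is to recognize the corollary as the instance $c=1$ of Theorem \ref{Type3}, so that the entire task reduces to verifying the hypotheses of that theorem for the choice $f=\omega$. First I would observe that $\omega$ is an additive function, since $\omega(mn)=\omega(m)+\omega(n)$ whenever $(m,n)=1$; this is precisely the class to which Theorem \ref{Type3} applies, and the coprimality constraints $(m,l)=(m+1,l)=1$ built into its proof mean that ordinary (not complete) additivity suffices. The nonvanishing requirement is immediate from the minimal order $\omega(n)\geq 1$ recorded in Theorem \ref{divisor2}.

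The second step is to produce the required growth bound on the consecutive ratio. Here I would invoke Theorem \ref{divisor2} in both directions at once: the lower bound $\omega(n)\geq 1$ controls the denominator from below, while the maximal order controls the numerator from above. Combining these yields
\begin{align}
\frac{\omega(n+1)}{\omega(n)}\leq \omega(n+1)\ll \frac{\log n}{\log \log n},\nonumber
\end{align}
which is exactly the hypothesis of Theorem \ref{Type3} with $c=1$ (using that $\log(n+1)\ll\log n$ and $\log\log(n+1)\gg\log\log n$ to absorb the shift into the implied constant).

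With the hypotheses in hand, Theorem \ref{Type3} delivers
\begin{align}
\mathcal{G}(x,l)^{+}_{\omega}\gg \frac{x}{l}\cdot \frac{\log \log (\frac{x}{l})}{\log (\frac{x}{l})},\nonumber
\end{align}
as claimed. The only quantitative point is the partial summation internal to Theorem \ref{Type3}, namely that $\sum_{m\leq y}\frac{\log\log m}{\log m}\gg y\cdot\frac{\log\log y}{\log y}$ with $y=\frac{x}{l}$; since the summand is slowly varying this follows from comparison with $\int_2^{y}\frac{\log\log t}{\log t}\,dt\sim y\,\frac{\log\log y}{\log y}$. I do not anticipate any genuine obstacle here: the corollary is essentially a matching exercise, aligning the extremal order of $\omega$ from Theorem \ref{divisor2} with the exponent $c=1$ in the general statement, and the substantive analytic work has already been discharged once Theorem \ref{Type3} is granted.
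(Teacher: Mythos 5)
Your proposal is correct and takes essentially the same route as the paper: both obtain the ratio bound $\frac{\omega(n+1)}{\omega(n)}\ll \frac{\log n}{\log\log n}$ from Theorem \ref{divisor2} and then apply Theorem \ref{Type3} with $c=1$. You merely make explicit the details the paper leaves implicit, namely bounding the denominator by the minimal order $\omega(n)\geq 1$, absorbing the shift from $n+1$ to $n$, and noting the partial-summation step inside Theorem \ref{Type3}.
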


\begin{proof}
By Theorem \ref{divisor2}, we obtain
\begin{align}
1\ll \frac{\omega(n+1)}{\omega(n)}\ll \frac{\log n}{\log \log n}.\nonumber
\end{align}
The lower bound is deduced by applying Theorem \ref{Type3}.
\end{proof}
\bigskip

Keeping in mind the different possible growth rates of the ratios of consecutive values of an additive function, we examine the situation where the ratio grows logarithmically.

\begin{theorem}\label{type4}
Let $f:\mathbb{N}\longrightarrow \mathbb{C}$ be a completely additive function with $f(n)\neq 0$ for all $n\geq 2$ such that \begin{align}
1\ll_f\frac{f(n+1)}{f(n)}\ll_f C\log n.\nonumber
\end{align}
We have the lower bound 
\begin{align}
\mathcal{G}(x,l)^{+}_{f}\gg_f \frac{1}{C}\frac{\frac{x}{l}}{\log (\frac{x}{l})}.\nonumber
\end{align}
\end{theorem}

\begin{proof}
We can write 
\begin{align}
\mathcal{G}(x,l)^{+}_{f}&=\#\{n\leq x:f(n)=f(n+l),~\text{for~fixed}~l>0\}\nonumber \\& \geq \#\left\{n\leq x:f(n)=f(n+l),~l|n,~\text{for~fixed}~l>0\right\}\label{ge4}
\end{align}
We observe that 
\begin{align}
\eqref{ge4}&=\#\left\{n\leq x:f(n)=f(n+l),~l|n,~\text{for~fixed}~l>0,~n=l\cdot m,~(l,m)=1\right\}\nonumber \\&+\#\left\{n\leq x:f(n)=f(n+l),~l|n,~\text{for~fixed}~l>0,~n=l\cdot m,~(l,m)\neq 1\right\}\nonumber 
\end{align}
so that under requirement $f(n)\neq 0$ for all $n\geq 2$ and that 
$$
1\ll_f\frac{f(n+1)}{f(n)}\ll_f C\log n
$$
we deduce
\begin{align}
\\&=\#\left \{m\leq\frac{x}{l}:f(m)=f(m+1),~\text{for~fixed}~l>0\right\}\nonumber \\&=\sum \limits_{\substack{m\leq \frac{x}{l}\\f(m)=f(m+1)}}1\nonumber \\&\gg_f \frac{1}{C}\sum \limits_{\substack{m\leq \frac{x}{l}\\f(m+1)=f(m)}}\left(\frac{f(m+1)}{f(m)}\right)\left(\frac{1}{\log m}\right)\nonumber \\&\gg_f \frac{1}{C}\sum \limits_{m\leq \frac{x}{l}}\frac{1}{\log m}.\nonumber
\end{align}
The lower bound can be deduced from the preceding sum.
\end{proof}

\begin{remark}
We provide a particular instance where these results might be useful considering the number of prime divisors with multiplicity function $\Omega(n)$. 
\end{remark}

\begin{corollary}
Let $\Omega(n):=\sum \limits_{p||n}1$ count the number of prime divisors of $n$ with multiplicity. We have the lower bound \begin{align}
\mathcal{G}(x,l)^{+}_{\Omega}\gg (\log 2)\frac{\frac{x}{l}}{\log (\frac{x}{l})}.\nonumber
\end{align}
\end{corollary}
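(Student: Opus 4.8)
The plan is to recognize this corollary as a direct application of the preceding theorem on completely additive functions, so the real work reduces to verifying that $\Omega$ satisfies its hypotheses. First I would observe that $\Omega$ is completely additive, since $\Omega(mn)=\Omega(m)+\Omega(n)$ holds for all $m,n$ by counting prime factors with multiplicity, and that $\Omega(n)\neq 0$ for $n\geq 2$, because every integer at least $2$ carries at least one prime factor. This places $\Omega$ within the exact class of functions to which the previous theorem applies.

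Next I would establish the required growth condition on the consecutive ratio using Theorem \ref{multiplicity}. The lower bound $\Omega(n)\geq 1$ gives $\frac{1}{\Omega(n)}\leq 1$, while the upper estimate supplies $\Omega(n+1)\ll \frac{\log(n+1)}{\log 2}$. Combining these,
\begin{align}
\frac{\Omega(n+1)}{\Omega(n)}\leq \Omega(n+1)\ll \frac{\log(n+1)}{\log 2}\ll \frac{1}{\log 2}\log n,\nonumber
\end{align}
so the hypothesis $\frac{f(n+1)}{f(n)}\ll C\log n$ holds with the explicit constant $C=\frac{1}{\log 2}$.

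With the hypotheses verified, I would invoke the preceding theorem directly. Since $\frac{1}{C}=\log 2$, its conclusion immediately yields
\begin{align}
\mathcal{G}(x,l)^{+}_{\Omega}\gg \frac{1}{C}\frac{\frac{x}{l}}{\log(\frac{x}{l})}=(\log 2)\frac{\frac{x}{l}}{\log(\frac{x}{l})},\nonumber
\end{align}
which is the asserted bound. There is no substantial obstacle beyond bookkeeping the constant; the one point deserving care is that the inequality $\Omega(n)\geq 1$ is precisely what prevents the denominator from collapsing and thereby keeps $C$ finite, so the lower bound in Theorem \ref{multiplicity} is used just as essentially as the upper bound. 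I would therefore be careful to cite both halves of that theorem rather than only the growth estimate.
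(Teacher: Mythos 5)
Your proposal is correct and follows essentially the same route as the paper: bound the ratio $\frac{\Omega(n+1)}{\Omega(n)}\ll \frac{\log n}{\log 2}$ via Theorem \ref{multiplicity} (using both $\Omega(n)\geq 1$ and the upper estimate), then apply the preceding theorem on completely additive functions with $C=\frac{1}{\log 2}$. You are in fact slightly more careful than the paper, which leaves the use of $\Omega(n)\geq 1$ implicit and cites Theorem \ref{Type3} where the constant $\log 2$ in the conclusion clearly comes from the $C\log n$ theorem you invoke.
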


\begin{proof}
By Theorem \ref{multiplicity}, we obtain the upper bound \begin{align}
\frac{\Omega(n+1)}{\Omega(n)}\ll \frac{\log n}{\log 2}.\nonumber
\end{align}
The lower bound can be deduced using Theorem \ref{Type3}.
\end{proof}
\bigskip

The results suggest that the degree of coincidences under a given multiplicative or additive function is largely dependent on the worst growth rate of consecutive values. That is, the smaller the growth rate of the ratios of consecutive values, the larger the set of coincidences, and vice-versa. Although not all cases have been considered in this study, one could easily notice that if we allow a linear growth rate of the ratio of consecutive values, then the set of coincidence must grow at least logarithmic in size. In particular, we deduce the following

 \begin{theorem}
Let $f:\mathbb{N}\longrightarrow \mathbb{C}$ be a completely additive function with $f(n)\neq 0$ for all $n\geq 2$ such that \begin{align}
1\ll_f \frac{f(n+1)}{f(n)}\ll_f Cn.\nonumber
\end{align}
We have the lower bound 
\begin{align}
\mathcal{G}(x,l)^{+}_{f}\gg_f \frac{1}{C}\log (\frac{x}{l}).\nonumber
\end{align}
\end{theorem} 

\footnote{
\par
.}%

\bibliographystyle{amsplain}

\end{document}